\newcommand{\beq}{\begin{equation}}
\newcommand{\eeq}{\end{equation}}
\newcommand{\bea}{\begin{eqnarray}}
\newcommand{\eea}{\end{eqnarray}}
\newcommand{\beas}{\begin{eqnarray*}}
\newcommand{\eeas}{\end{eqnarray*}}
\newtheorem{theorem}{Theorem}[section]
\newtheorem{assumption}[theorem]{Assumption}
\newtheorem{definition}[theorem]{Definition}
\newtheorem{proposition}[theorem]{Proposition}
\newtheorem{corollary}[theorem]{Corollary}
\newtheorem{remark}[theorem]{Remark}
\newtheorem{example}[theorem]{Example}
\newtheorem{examples}[theorem]{Examples}
\newtheorem{foo}[theorem]{Remarks}
\newenvironment{proof}{\addvspace{\medskipamount}\par\noindent{\it
Proof}.}
{\unskip\nobreak\hfill$\Box$\par\addvspace{\medskipamount}}
\newcommand{\bM}{\mathbb M}
\newcommand{\V}{\mathcal V}
\newcommand{\M}{\mathbb M}
\newcommand{\R}{\mathbb R}
\newcommand{\Dv}{{\Delta}_{\mathcal{V}}}
\title{Wasserstein contraction properties for hypoelliptic diffusions}
\author{Fabrice Baudoin\footnote{Author supported in part by Grant NSF-DMS 15-11-328}}
\date{Department of Mathematics, Purdue University
}
\begin{document}
\maketitle

\begin{abstract}
Gradient bounds had proved to be a very efficient tool for the control of the rate of convergence to equilibrium for parabolic evolution equations.  Among the gradient bounds methods, the celebrated Bakry-\'Emery criterion is a powerful way prove to convergence to equilibrium with an exponential rate. To be satisfied, this criterion requires some form of ellipticity of the diffusion operator. In the past few years, there have been several works extending the Bakry-\'Emery methodology to hypoelliptic operators. Inspired by these methods, we describe a rather simple generalization of the criterion that applies to a large class of hypoelliptic/hypocoercive operators. We are particularly interested in convergence to equilibrium in the Wasserstein distance and obtain several new results in that direction.

\end{abstract}

\tableofcontents

\section{Introduction}

The study of convergence to equilibrium for diffusion semigroups is a topic that attracted of attention since the very beginning of the theory of Markov processes. Literature on this topic is wide and a large set of methods are available to prove convergence. Many of those methods combine tools from functional analysis, analysis of partial differential equations, probability theory, ergodic theory and differential geometry.

\

The method of gradient bounds developed by Bakry and \'Emery in their celebrated paper \cite{BE} is particularly powerful. If $L$ is a diffusion operator, we can associate to $L$ the \textit{carr\'e du champ} operator
 \[
\Gamma(f,g)=\frac{1}{2}\left(L(fg)- fLg-gLf\right)
\]
 and its iteration
 \[
 \Gamma_2(f,g)=\frac{1}{2}\left(L\Gamma(f,g)-\Gamma(f,Lg)-\Gamma(g,Lf)\right).
 \]

If the operator $L$ admits a symmetric measure $\mu$ and if for every $f$, $\Gamma_2(f,f)\ge \rho \Gamma(f,f)$ for some positive constant $\rho$, then it is known  the semigroup $P_t$ generated by $L$ will converge exponentially fast to an equilibrium (see \cite{BE}). The assumption of symmetry for $L$ is actually not strictly necessary. Indeed, if $\Gamma_2(f,f)\ge \rho \Gamma(f,f)$, then the following gradient bound holds
\begin{align}\label{lkjpo}
\Gamma(P_t f) \le e^{-2\rho t} P_t \Gamma(f)
\end{align}
where $P_t$ denotes the semigroup generated by $L$. Assuming further that
\[
d(x,y)=\sup \{ f(x)-f(y), \Gamma(f) \le 1 \}
\]
is a genuine distance, one may deduce from Kuwada's duality \cite{Kuwada} that the following Wasserstein contraction holds
\[
W_2 (P_t^* \mu , P_t^*\nu) \le e^{-\rho t} W_2 ( \mu , \nu)
\]
where $W_2$ is the $L^2$-Wasserstein distance associated with $d$. This implies both uniqueness of an invariant measure and convergence to equilibrium of $P_t$. However, the assumption $\Gamma_2(f,f)\ge \rho \Gamma(f,f)$ requires some form of ellipticity for the operator $L$ and is therefore  not satisfied for strictly hypoelliptic operators.

\

The idea for hypoelliptic operators is to replace the gradient bound \eqref{lkjpo} by a gradient bound of the form
\begin{align}\label{kjhgf}
\mathcal{T}(P_t f) \le e^{-2\rho t} P_t \mathcal{T} (f)
\end{align}
where $\mathcal{T}$ is a gradient which is a priori not intrinsically associated to $L$ as $\Gamma$ is. For the gradient bound \eqref{kjhgf}, Kuwada's duality still leads to uniqueness of an invariant measure and convergence to equilibrium and it tuns out that \eqref{kjhgf} is satisfied for a large class of hypoelliptic operators

\

For hypoelliptic operators, the idea of using gradient bounds similar to \eqref{kjhgf} is not new and has already been fruitfully used in different contexts (see for instance \cite{baudoin-bakry,BB,BG}). It seems however useful to state in one place general theorems in great generality and then detail some relevant examples.  Also, the use of Kuwada's duality to prove convergence to equilibrium is new in our general framework and leads to convergence results worth pointing out.

\

The paper is organized as follows. In Section 2, we formulate the generalization of the Bakry-\'Emery criterion that we illustrate then with several examples. In Section 3, we discuss an alternative approach that has been proposed in \cite{baudoin-bakry}.

\

\textbf{Acknowledgment:} \textit{The author would like to thank Luigi Ambrosio for stimulating discussions about Kuwada's duality.}

\section{Convergence in the Wasserstein distance}

\subsection{Generalization of the Bakry-\'Emery criterion} 

We consider on $\mathbb{R}^n$ a hypoelliptic diffusion operator 
\[
L=\sum_{i,j=1}^n a_{ij} (x) \frac{\partial^2}{\partial x_i \partial x_j } +\sum_{i=1}^n b_i (x) \frac{\partial}{\partial x_i},
\]
 with smooth coefficients $a_{ij},b_i$. Our basic problem is to find conditions ensuring convergence to an equilibrium for the heat semigroup generated by $L$. Such problem has already been widely addressed in the literature and several methods are available. We propose here a variant of the celebrated Bakry-\'Emery criterion which is particularly well-suited when dealing with hypoelliptic or hypocoercive diffusion operators.
 
 Let $\sigma_{ij}:\mathbb{R}^n \to \mathbb{R}$ be smooth functions such that for every $x \in \mathbb{R}^n$, the matrix $(\sigma_{ij}(x))_{1\le i,j \le n}$ is symmetric definite positive.
 For $f,g \in C^\infty(\mathbb{R}^n)$, we consider the following differential bilinear form
 \[
 \mathcal{T}(f,g) =\sum_{i,j=1}^n \sigma_{ij}(x) \frac{\partial f}{\partial x_i}\frac{\partial g}{\partial x_j},
 \]
 and, to simplify the notations, we will simply denote $\mathcal{T}(f,f):=\mathcal{T}(f)$.

\

Our basic assumption on the couple $(L,\mathcal{T})$ is the following:

\begin{assumption}\label{assumption1}
We  assume that there exists a smooth function $U$ and a constant $C>0$ such that
\begin{itemize}
 \item[i)] $U \ge 1$
 \item[ii)] $\mathcal{T}( U ) \le C U$
 \item[iii)] $LU \le C U$ 
 \item[iv)]  $\{ U \le m \}$ is a compact set for every $m$. 
\end{itemize}
\end{assumption}

This assumption (more precisely i),  iii) and iv) )   classically implies that $L$ is the generator of a stochastically complete Markov semigroup $(P_t)_{t \ge 0}$ that uniquely solves the parabolic Cauchy problem
\[
\begin{cases}
\frac{\partial \phi}{\partial t} =L\phi \\
\phi(0,x)=f(x)
\end{cases}
\]
in $L^\infty$. By hypoellipticity of $L$, this semigroup  maps the set of smooth and compactly supported functions $C_0^\infty(\M)$ into the set of smooth functions.

\

We can associate to the quadratic form $\mathcal{T}$ a distance $d$ defined as follows:
\[
d(x,y)=\sup \left\{ |f(x) -f(y)| , \mathcal{T}(f) \le 1, f \in C^\infty(\R^n) \right\}. 
\] 
This distance induces the usual topology on $\R^n$ and Assumption \ref{assumption1} (more precisely i),  ii) and iv) ) implies that $(\R^n,d)$ is a complete length metric space. 

\

Denote $\mathcal{P}(\mathbb{R}^n)$ the set of probability measures on $\R^n$.  For $1 \le p \le \infty$, the $L^p$-Wasserstein distance of two probability measures $\nu_1$ and $\nu_2$ on $\mathbb{R}^n$ is defined by
 $$
 W_p(\nu_1,\nu_2)  = \inf_{\pi \in \Pi} \| d \|_{L^p(\pi)}
$$
where the infimum is taken over  the set of probability measures $\Pi$ on $\mathbb{R}^n\times \mathbb{R}^n$ whose marginals are respectively $\nu_1$ and  $\nu_2$.

Finally, we introduce the following  bilinear form: For $f \in C^\infty(\mathbb{R}^n)$,
\[
\mathcal{T}_2 (f)=\frac{1}{2} ( L \mathcal{T}(f) - 2 \mathcal{T}( f , Lf)).
\]
Our main theorem is the following:
\begin{theorem}\label{contra1}
Let $K \in \mathbb{R}$. The following statements are equivalent:
\begin{enumerate}
\item For every $f \in C_0^\infty(\mathbb{R}^n)$,
\[
\mathcal{T}_2 (f) \ge -K \mathcal{T}(f).
\]
\item For every $f \in C_0^\infty(\mathbb{R}^n)$,
\[
\mathcal{T}( P_t f ) \le e^{2Kt} P_t(\mathcal{T}(f)). 
\]
\item  For every $\mu,\nu \in \mathcal{P}(\mathbb{R}^n)$, and $t \ge 0$,
\[
W_{2} (P^*_t \mu , P^*_t \nu) \le e^{Kt} W_{2} (\mu,\nu).
\]
\end{enumerate}
\end{theorem}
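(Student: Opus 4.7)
The plan is to establish the chain of equivalences by combining two essentially standard pieces: a Bakry–\'Emery style interpolation for $(1) \Leftrightarrow (2)$, and Kuwada's duality for $(2) \Leftrightarrow (3)$.

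For $(1) \Rightarrow (2)$, I would fix $f \in C_0^\infty(\mathbb{R}^n)$ and $t > 0$, and consider the interpolation
\[
\phi(s) = P_s\bigl(\mathcal{T}(P_{t-s}f)\bigr), \qquad s \in [0,t].
\]
Hypoellipticity of $L$ together with Assumption~\ref{assumption1} guarantees that $P_{t-s}f$ is smooth and that all quantities are well-defined. A direct computation gives
\[
\phi'(s) = P_s\bigl(L\mathcal{T}(P_{t-s}f) - 2\mathcal{T}(P_{t-s}f, LP_{t-s}f)\bigr) = 2 P_s\bigl(\mathcal{T}_2(P_{t-s}f)\bigr).
\]
Assumption (1) then yields $\phi'(s) \ge -2K\phi(s)$, so Gr\"onwall's inequality on $[0,t]$ produces $\phi(t) \ge e^{-2Kt}\phi(0)$, which is exactly (2). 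Conversely, $(2) \Rightarrow (1)$ follows by Taylor-expanding (2) at $t = 0$: the zeroth-order terms cancel and the first-order comparison is exactly $-2\mathcal{T}_2(f) \le 2K\mathcal{T}(f)$.

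For $(2) \Leftrightarrow (3)$, I would invoke Kuwada's duality \cite{Kuwada}, whose applicability is secured by the paper's earlier observation that $(\mathbb{R}^n, d)$ is a complete length space. The direction $(2) \Rightarrow (3)$ proceeds by noting that if $\mathcal{T}(f) \le 1$, then (2) yields $\mathcal{T}(P_t f) \le e^{2Kt}$, so $P_t f$ is $e^{Kt}$-Lipschitz with respect to $d$; Kuwada's argument, which amounts to a dual $L^2$ interpolation along an optimal coupling, transfers this Lipschitz bound into the $W_2$ contraction (3). The reverse direction uses the same duality to recover the pointwise $\mathcal{T}$ bound from Wasserstein contraction, essentially by differentiating the $W_2$ estimate against suitable test functions.

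The main technical obstacle I expect is the rigorous justification of the differentiation of $\phi$ and the exchange of $L$ with $P_s$, since $P_{t-s}f$ is no longer compactly supported even when $f$ is. This is precisely where the Lyapunov function $U$ of Assumption~\ref{assumption1} enters: conditions (i)--(iv) provide stochastic completeness and sufficient $L^\infty$ control of $\mathcal{T}(P_{t-s}f)$ and $L\mathcal{T}(P_{t-s}f)$ so that $\phi$ is indeed $C^1$ on $[0,t]$ with the claimed derivative, typically after localizing by cut-offs built from $U$ and passing to the limit. The Kuwada step, though technically delicate in its own right, can be treated as a black box once the complete length-space structure is in place.
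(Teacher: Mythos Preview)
Your proposal is correct and follows essentially the same route as the paper: Kuwada's duality for $(2)\Leftrightarrow(3)$, the interpolation $\phi(s)=P_s(\mathcal{T}(P_{t-s}f))$ with Gr\"onwall for $(1)\Rightarrow(2)$, and differentiation at $t=0$ for $(2)\Rightarrow(1)$. The only point worth flagging is that the localization you allude to is carried out in the paper not by cutting off the function but by modifying the operator to $L_n=h_n^2 L$ with $h_n=h(U/n)$, first extracting a uniform Lipschitz bound on $P_t^n f$ and only then running the Bakry--\'Emery argument via the parabolic comparison principle in $L^\infty$; this two-step structure is slightly more elaborate than a direct $L^\infty$ bound on $\mathcal{T}(P_{t-s}f)$, but it is exactly the mechanism you anticipated.
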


\begin{proof}
The equivalence between $2)$ and $3)$ is a consequence of Kuwada's duality \cite{Kuwada}.  

We now prove that 1) is equivalent to 2). Let us assume that 1) holds.
If $\mathcal{T}$ is compactly supported, the argument is standard and easy. Indeed, fix $T>0$ and consider the functional
 \[
\phi (t)= P_t(\mathcal{T}( P_{T-t} f )).
\]
Differentiating $\phi$ yields
\[
\phi' (t)= 2 P_t(\mathcal{T}_2(P_{T-t} f))\ge -2K \phi(t),
\]
and 2) easily follows from Gronwall's lemma.

When $\mathcal{T}$ is not compactly supported, since there is no a priori bound on $\mathcal{T}( P_{T-t} f )$, $P_t(\mathcal{T}(P_{T-t} f ))$ may not be well-defined and  we need to use a careful localization argument that relies on the existence of the function $U$. We adapt some arguments  by F.Y. Wang \cite{FYW2} to work out the localization.

Consider a smooth and decreasing function $h: \R_{\ge 0} \to \R$ such that $h=1$ on $[0,1]$ and $h=0$ on $[2,+\infty)$. Denote then $h_n=h \left( \frac{U}{n} \right)$ and consider the compactly supported diffusion operator
\[
L_n=h^2_n L.
\]
Since $L_n$ is compactly supported, a Markov semigroup $P_t^n$ with generator  $L_n$ is easily constructed as the unique bounded  solution of  $\frac{\partial P_t^n f}{\partial t} =L_nP^n_t f$, $f \in L^\infty$. Then, for every bounded $f$, pointwise
\[
P_t^n f \to P_t f , \quad n \to \infty.
\]
We fix $t >0$, $n \ge 1$  and $f \in C^\infty(\R^n)$ compactly supported inside the set $\{ U \le n \}$. Consider the functional defined for $s \in [0,t]$ and evaluated at a fixed point $x_0$ in the set $\{ U \le n \}$:
\[
\Phi_n(s) =P^n_s ( \mathcal{T}( P^n_{t-s} f )).
\]
We have
\[
\Phi'_n(s) =P^n_s (L_n  \mathcal{T}( P^n_{t-s} f )-2 \mathcal{T}( L_n P^n_{t-s} f,   P^n_{t-s} f) ).
\]
Now, observe that by assumption, and denoting $K^{-}$ the negative part of $K$, 
\begin{align*}
 & L_n  \mathcal{T}( P^n_{t-s} f )-2 \mathcal{T}( L_n P^n_{t-s} f,   P^n_{t-s} f)  \\
   =& 2h_n^2 \mathcal{T}_2 (  P^n_{t-s} f, P^n_{t-s} f) -4h_nL P^n_{t-s} f  \mathcal{T}( h_n  ,  P^n_{t-s} f) \\
 \ge & -2K h_n^2 \mathcal{T}( P^n_{t-s} f ) -4h_nL P^n_{t-s} f  \mathcal{T}( h_n  ,  P^n_{t-s} f)  \\
  \ge &  -2K h_n^2 \mathcal{T}( P^n_{t-s} f ) -4 P^n_{t-s} L_n f   \mathcal{T}( h_n  ,  P^n_{t-s} f) \\
  \ge & -2K h_n^2 \mathcal{T}( P^n_{t-s} f )-4 \| Lf \|_\infty \sqrt{ \mathcal{T}( \ln h_n) } \sqrt{ \mathcal{T} (P^n_{t-s} f)}| \\
   \ge &   -(2K^{-}+2)   \mathcal{T}( P^n_{t-s} f )-2 \| Lf \|^2_\infty \mathcal{T}( \ln h_n) . 
\end{align*}
The term $\mathcal{T}( \ln h_n)$ can be estimated as follows inside the set $\{ U \le 2n \}$
\[
\sqrt{ \mathcal{T}( \ln h_n) } =- \frac{1}{n h_n } h' \left( \frac{U}{n} \right) \sqrt{ \mathcal{T} (U)} \le \frac{C}{ h_n },
\]
where $C$ is a constant independent from $n$. On the other hand a direct computation and the assumptions on $U$ show that
\[
L_n \left( \frac{1}{h_n^2} \right) \le \frac{C}{h^2_n},
\]
where, again,  $C$ is a constant independent from $n$. This last estimate  implies
\[
P^n_s \left( \frac{1}{h_n^2} \right) \le  \frac{e^{Cs}}{h^2_n}.
\]
Putting the pieces together we end up with a differential inequality
\[
\Phi'_n(s) \ge   -(2K^{-}+2) \Phi_n(s)-C,
\]
where $C$ now depends on $f$ and  $t$, but still does not depend on $n$. Integrating this inequality from $0$ to $t$, yields a bound of the type
\[
\mathcal{T}( P^n_t f ) \le C^2,
\]
where  $C$ depends on $f$ and  $t$. This bounds holds uniformly on the set $\{ U \le n \}$. 
\

We now pick any $x,y \in \M, f\in C_0^\infty(\M)$ and $n$ big enough so that $x,y \in \{ U \le n \}$ and $\mathbf{Supp} (f) \subset  \{ U \le n \}$. We have from the previous inequality
\[
| P^n_t f (x) -P^n_t f (y) | \le C d(x,y),
\]
and thus, by taking the limit when $n \to \infty$,
\[
| P_t f (x) -P_t f (y) | \le C d(x,y).
\]
We therefore reach the important conclusion that $P_t$ transforms $C_0^\infty(\M)$ into a subset of the set of smooth and Lipschitz functions. With this conclusion in hands, we can now run the usual Bakry-\'Emery argument.

Let $f \in C^\infty_0(\mathbb{M})$,  and $T>0$, and consider the function
\[
\phi (x,t)=\mathcal{T}( P_{T-t} f )(x),
\]
We have
\[
L \phi+\frac{\partial \phi}{\partial t} =2 \mathcal{T}_2(  P_{T-t} f) \ge -2K \phi .
\]
Since we know that $\phi$ is bounded, we can use  the parabolic comparison principle in $L^\infty$  to conclude, thanks to Gronwall's inequality,
\[
\mathcal{T}( P_t f ) \le e^{2Kt} P_t(\mathcal{T}(f)).
\]
Finally, 2) implies 1) is easy to prove by differentiating the inequality 
\[
\mathcal{T}( P_t f ) \le e^{2Kt} P_t(\mathcal{T}(f))
\]
at $t=0$.
\end{proof}

As an immediate corollary, we deduce the following generalization of the Bakry-\'Emery criterion for ergodicity.

\begin{corollary}
Assume that there exists a constant $K>0$ such that for every $f \in C_0^\infty(\mathbb{R}^n)$,
\[
\mathcal{T}_2 (f) \ge K \mathcal{T}(f).
\]
Then, there exists a unique $\mu \in \mathcal{P}(\mathbb{R}^n)$  such that for every $t \ge 0$, $P^*_t \mu=\mu$. Moreover, for every $\nu \in \mathcal{P}(\mathbb{R}^n)$
\[
W_{2} (P^*_t \nu , \mu) \le e^{-Kt} W_{2} (\mu,\nu).
\]

\end{corollary}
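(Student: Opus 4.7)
The plan is to invoke Theorem \ref{contra1} with the sign of the constant flipped, and then extract existence and uniqueness of an invariant measure from the resulting exponential contraction by a standard completeness/Banach-type argument in Wasserstein space.

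First, the assumption $\mathcal{T}_2(f) \ge K\mathcal{T}(f)$ is precisely condition 1) of Theorem \ref{contra1} with the constant $-K$ in place of $K$. So the theorem yields the contraction
\[
W_2(P_t^*\mu_1, P_t^*\mu_2) \le e^{-Kt} W_2(\mu_1,\mu_2)
\]
for every $\mu_1,\mu_2 \in \mathcal{P}(\R^n)$ and every $t\ge 0$. This already delivers the quantitative estimate stated in the corollary, provided we can identify $\mu$ as an invariant measure.

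For uniqueness, I would take two invariant probability measures $\mu_1,\mu_2$ and observe that, as long as $W_2(\mu_1,\mu_2)<\infty$, the contraction forces
\[
W_2(\mu_1,\mu_2) = W_2(P_t^*\mu_1, P_t^*\mu_2) \le e^{-Kt} W_2(\mu_1,\mu_2),
\]
which gives $W_2(\mu_1,\mu_2)=0$ upon letting $t\to\infty$. For existence, I would start from any reference measure $\nu_0$ with suitable moments (for instance a Dirac mass $\delta_{x_0}$) and consider the sequence $\nu_n := P_n^*\nu_0$. The contraction yields
\[
W_2(\nu_{n+1},\nu_n) = W_2(P_n^* P_1^*\nu_0, P_n^*\nu_0) \le e^{-Kn}\, W_2(P_1^*\nu_0,\nu_0),
\]
so $(\nu_n)$ is a geometric Cauchy sequence. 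Since the relevant subset of $(\mathcal{P}(\R^n),W_2)$ is complete, $\nu_n$ converges to some $\mu$, and continuity of $P_t^*$ in $W_2$ (a consequence of the contraction) gives $P_t^*\mu = \mu$ for all $t$.

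The main obstacle I anticipate is the technical verification that the Cauchy argument takes place in a complete subspace of $(\mathcal{P}(\R^n),W_2)$, which amounts to checking that $W_2(P_1^*\nu_0,\nu_0)<\infty$ and that second moments with respect to the intrinsic distance $d$ remain finite along the evolution. Both points can be handled using the Lyapunov function $U$ of Assumption \ref{assumption1}: properties ii)--iv) imply both a control on the tails of $P_t^*\nu_0$ and the integrability of $d(x_0,\cdot)^2$ after one application of $P_1^*$, so that the whole sequence lives in the geodesically complete metric space of probability measures with finite $d$-second moment. Everything else — tightness of $(\nu_n)$, that the $W_2$-limit is invariant, and the final estimate — follows by routine continuity arguments once the contraction is in hand.
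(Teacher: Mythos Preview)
Your proposal is correct and follows exactly the route the paper intends: the corollary is stated as an ``immediate'' consequence of Theorem \ref{contra1}, with no proof given, and the Banach-type fixed-point argument in $(\mathcal{P}(\mathbb{R}^n),W_2)$ you spell out is precisely the standard way to extract existence and uniqueness from a strict Wasserstein contraction. Your care about finiteness of $W_2(P_1^*\nu_0,\nu_0)$ and completeness is warranted and your suggested use of the Lyapunov function $U$ from Assumption \ref{assumption1} is the right fix; the paper simply sweeps these details under the word ``immediate''.
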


Under further conditions, we can also prove that the invariant measure $\mu$ satisfies a Poincar\'e inequality. We recall that the \textit{carr\'e du champ operator} associated with $L$ is defined as:
\[
\Gamma(f)=\frac{1}{2} (Lf^2 -2fLf), \quad f \in C^\infty (\R^n).
\]

\begin{proposition}
Assume that there exists a constant $a >0$ such that for every $f \in C^\infty(\R^n)$,
\[
\Gamma(f) \le a \mathcal{T}(f)
\]
and that there exists a constant $K>0$ such that for every $f \in C^\infty(\mathbb{R}^n)$,
\[
\mathcal{T}_2 (f) \ge K \mathcal{T}(f).
\]
Then, the unique invariant probability measure $\mu$ of $L$ satisfies the Poincar\'e inequality
\[
\int f^2 d\mu -\left( \int f d\mu \right)^2 \le \frac{a}{K} \int \mathcal{T} (f) d\mu, \quad f \in C_0^\infty(\mathbb{R}^n).
\]
\end{proposition}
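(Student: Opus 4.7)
The approach is the classical Bakry--\'Emery variance decay argument, adapted so that $\mathcal{T}$ plays the role of $\Gamma$ after the comparison $\Gamma \le a\,\mathcal{T}$, and with the contraction estimate from Theorem~\ref{contra1} providing the exponential decay. Fix $f \in C_0^\infty(\mathbb{R}^n)$ and set
\[
V(t) := \int (P_t f)^2\, d\mu - \left(\int f\, d\mu\right)^{\!2}.
\]
Since $\int P_t f\, d\mu = \int f\, d\mu$ by invariance, $V(t)$ is the $\mu$-variance of $P_t f$, so $V(t) \ge 0$; the goal is the bound $V(0) \le (a/K) \int \mathcal{T}(f)\, d\mu$.

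The core computation is the differential identity
\[
V'(t) = -2\int \Gamma(P_t f)\, d\mu,
\]
which follows from $L((P_t f)^2) = 2 P_t f \cdot L P_t f + 2\Gamma(P_t f)$ combined with the invariance identity $\int L g\, d\mu = 0$. The hypothesis $\Gamma \le a\,\mathcal{T}$, the contraction $\mathcal{T}(P_t f) \le e^{-2Kt} P_t \mathcal{T}(f)$ from Theorem~\ref{contra1} (applied with our hypothesis $\mathcal{T}_2 \ge K\,\mathcal{T}$), and the invariance identity $\int P_t \mathcal{T}(f)\, d\mu = \int \mathcal{T}(f)\, d\mu$ then combine to give
\[
-V'(t) \le 2a\,e^{-2Kt} \int \mathcal{T}(f)\, d\mu.
\]
Integrating this from $0$ to $T$ yields $V(0) \le V(T) + (a/K)(1-e^{-2KT}) \int \mathcal{T}(f)\, d\mu$, and sending $T \to \infty$ together with the ergodicity $V(T) \to 0$ supplied by the previous corollary produces the desired Poincar\'e inequality.

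The main technical obstacle is the rigorous justification of the invariance identity $\int L((P_t f)^2)\, d\mu = 0$ and of the differentiation of $t \mapsto \int (P_t f)^2\, d\mu$, since $(P_t f)^2$ is not compactly supported and need not lie in a convenient domain of $L$; a secondary issue is to verify $V(T) \to 0$ when the second moment of $\mu$ for $d$ is not a priori finite. I would handle the first two points by the localization procedure used in the proof of Theorem~\ref{contra1}: carry out the identities first for the truncated generator $L_n = h_n^2 L$ and its semigroup $P_t^n$, for which the manipulations are licit, and then pass to the limit using the uniform $\mathcal{T}$-bound on $P_t^n f$ established there together with the pointwise convergence $P_t^n f \to P_t f$ and dominated convergence against the probability $\mu$. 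The companion identity $\int P_t \mathcal{T}(f)\, d\mu = \int \mathcal{T}(f)\, d\mu$ raises no difficulty, since $\mathcal{T}(f)$ is compactly supported inside the support of $f$; and $V(T) \to 0$ can be obtained from the Lipschitz decay $\sqrt{\mathcal{T}(P_t f)} \le e^{-Kt}\sqrt{P_t \mathcal{T}(f)}$ combined with the Wasserstein convergence of the previous corollary applied to a suitable auxiliary reference measure.
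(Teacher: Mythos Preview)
Your proposal is correct and follows essentially the same semigroup interpolation argument as the paper: differentiate $t\mapsto \int (P_t f)^2\,d\mu$, use $\Gamma\le a\,\mathcal{T}$, apply the gradient bound $\mathcal{T}(P_t f)\le e^{-2Kt}P_t\mathcal{T}(f)$, invoke invariance, and integrate in $t$. The paper's proof is in fact considerably more terse than yours and does not discuss the technical justifications (localization, $V(T)\to 0$) that you flag.
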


\begin{proof}
We can adapt the classical semigroup interpolation method by Bakry-\'Emery.
\begin{align*}
\int f^2 d\mu -\left( \int f d\mu \right)^2 &=-\int_0^{+\infty} \frac{d}{dt} \int (P_t f)^2 d\mu dt \\
 &=2\int_0^{+\infty} \int \Gamma(P_t f) d\mu dt \\
 &\le 2a\int_0^{+\infty}  \int \mathcal{T}(P_t f) d\mu dt \\
 &\le 2a\int_0^{+\infty}  \int e^{-2Kt} P_t \mathcal{T}(f) d\mu dt \\
 &\le 2a \int_0^{+\infty}   e^{-2Kt} dt \int \mathcal{T}(f) d\mu
\end{align*}
\end{proof}

\

We shall study in details examples of application of the above theorems in Sections 2.2, 2.3 and 2.4, however as an appetizer we give the following general class of examples. It shows that Theorem \ref{contra1} may be applied in very degenerate hypoelliptic situations.

Consider on $\mathbb{R}^n$ a hypoelliptic diffusion operator of the form
\[
L=\sum_{i=1}^n b_i(x) \frac{\partial }{\partial x_i} +\sum_{i,j=1}^n a_{ij} \frac{\partial^2 }{\partial x_i \partial x_j}  
\]
where the $b_i$'s are smooth functions and $A=(a_{ij})$ is a constant non negative symmetric matrix which does not need to be positive definite. If we assume $b=(b_1,\cdots,b_n)$ to be Lipschitz, then the function $U(x)=1+\| x \|^2 $ is  such that, for some constant $C>0$, $LU \le C U$ and $\| \nabla U \|^2 \le C U$. Therefore Assumption \ref{assumption1} is satisfied.

\

Let us denote by $(\partial b)_y$ the Jacobian matrix of $b$ at a point $y \in \R^n$ and by $P_t$ the semigroup generated by $L$. We have then the following theorem:

\begin{theorem}
 Assume that there exists a constant positive definite matrix $\Sigma$ and a constant $a>0$ such that for every $x,y \in \mathbb{R}^n$,
\[
-\langle \Sigma (\partial b)_y x , x \rangle \ge a \| x \|^2.
\]
Then, there exists a unique $\mu \in \mathcal{P}(\mathbb{R}^n)$  such that for every $t \ge 0$, $P^*_t \mu=\mu$. Moreover, there exist constants $C_1,C_2>0$  such that for every $\nu \in \mathcal{P}(\mathbb{R}^{n})$, and $t \ge 0$,
\[
W_{2} (P^*_t \nu , \mu) \le C_1 e^{-C_2t} W_{2} (\mu,\nu),
\]
where $W_2$ is the Wasserstein distance associated to the Euclidean distance on $\mathbb{R}^n$. Moreover, $\mu$ satisfies a Poincar\'e inequality
\[
\int f^2 d\mu -\left( \int f d\mu \right)^2 \le C_3 \int \| \nabla f \|^2 d\mu, \quad f \in C_0^\infty(\mathbb{R}^n),
\]
for some constant $C_3>0$.
\end{theorem}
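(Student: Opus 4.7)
The plan is to reduce the theorem to Corollary 2.2 and the Poincar\'e proposition by choosing the quadratic form
\[
\mathcal{T}(f) = \langle \Sigma \nabla f, \nabla f\rangle,
\]
where $\Sigma$ is the positive-definite matrix furnished by the hypothesis. Assumption \ref{assumption1} has essentially already been verified in the paragraph preceding the theorem for $U(x) = 1 + \|x\|^2$; the only additional point is $\mathcal{T}(U) = 4\langle \Sigma x, x\rangle \le 4\lambda_{\max}(\Sigma) U$, which is immediate since $\Sigma$ is fixed and positive definite. So the task reduces to verifying $\mathcal{T}_2 \ge K \mathcal{T}$ for some $K>0$.

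The heart of the argument is the computation of $\mathcal{T}_2(f)$. Because both $A$ and $\Sigma$ are constant, a direct expansion of $\tfrac{1}{2}(L\mathcal{T}(f) - 2\mathcal{T}(f, Lf))$ produces substantial cancellations: the third-derivative terms vanish (this uses constancy of $A$), and the $b_k\partial_{k\cdot}^2 f$ terms cancel pairwise (using symmetry of $\Sigma$). What remains is
\[
\mathcal{T}_2(f) = \mathrm{tr}(\Sigma H A H) - \langle \Sigma (\partial b)\nabla f, \nabla f\rangle,
\]
with $H = \nabla^2 f$. The first term is non-negative since $\mathrm{tr}(\Sigma HAH) = \|A^{1/2} H \Sigma^{1/2}\|_{\mathrm{HS}}^2$. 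The hypothesis, applied with $x = \nabla f$, gives $-\langle \Sigma(\partial b)\nabla f, \nabla f\rangle \ge a\|\nabla f\|^2$, and since $\mathcal{T}(f) \le \lambda_{\max}(\Sigma)\|\nabla f\|^2$ we obtain
\[
\mathcal{T}_2(f) \ge K \mathcal{T}(f), \qquad K := \frac{a}{\lambda_{\max}(\Sigma)} > 0.
\]

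The theorem now follows by assembling the previous results. Corollary 2.2 yields a unique invariant probability measure $\mu$ and the contraction $W_2^d(P_t^*\nu, \mu) \le e^{-Kt}\, W_2^d(\nu, \mu)$, where $d$ is the intrinsic distance of $\mathcal{T}$. Testing affine $f$ (lower bound by duality) and using a straight-line upper bound shows $d(x,y) = \sqrt{\langle \Sigma^{-1}(x-y), x-y\rangle}$, which is equivalent to the Euclidean distance with $\|x-y\|/\sqrt{\lambda_{\max}(\Sigma)} \le d(x,y) \le \|x-y\|/\sqrt{\lambda_{\min}(\Sigma)}$. Transferring the Wasserstein inequality through this equivalence yields the claim with $C_1 = \sqrt{\lambda_{\max}(\Sigma)/\lambda_{\min}(\Sigma)}$ and $C_2 = K$. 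For the Poincar\'e inequality, the preceding proposition applies because $\Gamma(f) = \langle A\nabla f, \nabla f\rangle \le (\lambda_{\max}(A)/\lambda_{\min}(\Sigma))\,\mathcal{T}(f)$, giving $C_3 = \lambda_{\max}(A)\lambda_{\max}(\Sigma)/(K\lambda_{\min}(\Sigma))$ after translating $\int \mathcal{T}(f)\,d\mu \le \lambda_{\max}(\Sigma)\int \|\nabla f\|^2\,d\mu$.

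The only non-routine step is the $\mathcal{T}_2$ computation: one must carefully track the cancellations that rely on the constancy of $A$ and on the convention $(\partial b)_{ij} = \partial_i b_j$ under which the remaining drift term matches the quadratic form appearing in the hypothesis. Everything else is straightforward linear algebra on fixed positive-definite matrices.
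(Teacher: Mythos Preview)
Your proof is correct and follows exactly the paper's approach: choose $\mathcal{T}(f)=\langle \Sigma\nabla f,\nabla f\rangle=\|\sigma\nabla f\|^2$, verify $\mathcal{T}_2(f)\ge -\langle \Sigma(\partial b)\nabla f,\nabla f\rangle$, and then invoke Theorem~\ref{contra1} (via its corollary) together with the equivalence of the $\mathcal{T}$-distance and the Euclidean distance. You have simply filled in details the paper leaves implicit---the Hessian trace term, the explicit intrinsic distance $d(x,y)=\langle\Sigma^{-1}(x-y),x-y\rangle^{1/2}$, the constants, and the appeal to the Poincar\'e proposition---and flagged the Jacobian convention needed to match the hypothesis.
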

\begin{proof}
Let $\sigma$ be a positive definite square root of $\Sigma$ and consider the gradient
\[
\mathcal{T}(f)=\| \sigma \nabla f \|^2.
\]
An easy computation shows that
\[
\mathcal{T}_2(f) \ge - \langle \Sigma \partial b \nabla f , \nabla f \rangle.
\]
The result follows then from Theorem \ref{contra1} because the distance associated to $\mathcal{T}$ is equivalent to the Euclidean distance.
\end{proof}

The main problem in practice will be to find the matrix $\Sigma$. In the case of the kinetic Fokker-Planck equation, the existence of the matrix $\Sigma$ is a non trivial problem that we address in the following section. 

\subsection{Example 1:  The kinetic Fokker-Planck equation}

In this section we study the kinetic Fokker-Planck equation which is an important example of equation to which the methods apply. We, in particular, prove for the first time contraction in the Wasserstein space for this equation.

Let $V:\mathbb{R}^n \to \mathbb{R}$ be a smooth function. The kinetic Fokker-Planck equation with confinement potential $V$ is the parabolic  partial differential equation:
\begin{equation}\label{FP2}
\frac{\partial h}{\partial t}=\Delta_v h - v \cdot \nabla_v h+\nabla_x V \cdot \nabla_v h -v\cdot \nabla_x h , \quad (x,v) \in \mathbb{R}^{2n}.
\end{equation}
This equation is Kolmogorov-Fokker-Planck equation associated to the stochastic differential equation
\[
\begin{cases}
dx_t =v_t dt \\
dv_t=-v_t dt -\nabla V (x_t) dt +dB_t,
\end{cases}
\]
where $(B_t)_{ t\ge 0}$ is a Brownian motion in $\mathbb{R}^n$. 

The operator
\[
L=\Delta_v  - v \cdot \nabla_v +\nabla_x V \cdot \nabla_v -v\cdot \nabla_x 
\]
admits for invariant measure 
\[
d\mu=e^{-V(x)-\frac{\| v \|^2}{2}} dxdv.
\]
It is readily checked that $L$ is not symmetric with respect to $\mu$. The operator $L$ is hypoelliptic and the generator of a strongly continuous sub-Markov semigroup $(P_t)_{t \ge 0}$. If we assume that the Hessian $\nabla^2 V$ is bounded, which we do in the sequel, then  the semigroup $P_t$ generated by $L$ is Markovian . 

Observe that since $\nabla V$ is Lipschitz, the function $U(x,v)=1+\| x \|^2 +\| v\|^2$ is  such that, for some constant $C>0$, $LU \le C U$ and $\| \nabla U \|^2 \le C U$. Therefore Assumption \ref{assumption1} is satisfied.

\

The problem of convergence to an equilibrium for solutions of the kinetic Fokker-Planck equation has attracted a lot of interest in the literature and many approaches have been proposed.

A functional analytic approach, based on previous ideas by Kohn and H\"ormander, relies on delicate spectral localization tools to study the resovent and prove exponential convergence to equilibrium with explicit bounds on the rate. For this approach, we refer to Eckmann and Hairer \cite{EH}, H\'erau and Nier \cite{HN1}, and  Heffer and Nier \cite{HN2}. 

\

L. Wu in \cite{Wu}, Mattingly, Stuart and Higham in \cite{Matt} and Bakry, Cattiaux and Guillin in \cite{BCG} use  Lyapunov functions method and probabilistic tools to prove  exponential convergence to equilibrium.

\

One of the most general results is due to  Villani (see Theorem \ref{Villani}). Villani in his memoir \cite{Villani1} introduces the concept of hypocoercivity and derives very general sufficient conditions ensuring the convergence to an equilibrium. The main strategy, already implicit in the work by Talay \cite{talay} is to work in a suitable Hilbert space associated to the equation and to find in this Hilbert space a nice norm which is equivalent to the original one, but with respect to which convergence to equilibrium is easy to obtain; We refer to Section 4.1 in \cite{Villani1} for a more precise description. The work by Villani has recently been revisited in \cite{baudoin-bakry} and we summarize the approach of \cite{baudoin-bakry} in Section 3 of the present paper.

\

In this section, by using the results of the previous section , we prove convergence to equilibrium  and moreover prove a contraction property in the Wasserstein space.  Our assumptions are admittedly quite strong  on the potential $V$, but the advantage of the method is not to use in any way the knowledge of the invariant measure.

\

We denote by $W_2$ the usual $L^2$ Wasserstein distance associated to the Euclidean metric on $\mathbb{R}^{2n}$.

\begin{theorem}\label{plk}
Assume that there exist constants $m,M>0$ such that $$m \le \nabla^2 V \le M$$ and $\sqrt{M} -\sqrt{m} \le 1$. Then, there exist constants $C_1,C_2>0$  such that for every $\mu,\nu \in \mathcal{P}(\mathbb{R}^{2n})$, and $t \ge 0$,
\[
W_{2} (P^*_t \mu , P^*_t \nu) \le C_1 e^{-C_2t} W_{2} (\mu,\nu).
\]
\end{theorem}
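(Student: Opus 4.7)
The plan is to apply the theorem immediately above, which reduces the problem to producing a constant symmetric positive definite $(2n)\times(2n)$ matrix $\Sigma$ and a constant $\alpha>0$ such that
\[
-\langle \Sigma (\partial b)_{y}\xi, \xi\rangle \ge \alpha\|\xi\|^2 \qquad \text{for all } y,\xi\in\R^{2n}.
\]
For the kinetic Fokker--Planck drift $b(x,v) = (v,-v-\nabla V(x))$ one has $(\partial b)_{(x,v)} = J(x) := \left(\begin{smallmatrix} 0 & I_n \\ -\nabla^2 V(x) & -I_n \end{smallmatrix}\right)$, and I look for $\Sigma$ of the block-scalar form $\Sigma = \left(\begin{smallmatrix} a I_n & b I_n \\ b I_n & c I_n \end{smallmatrix}\right)$ with $a,c>0$, $ac>b^2$. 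The key structural observation is that for such $\Sigma$, if one diagonalises $\nabla^2 V(x)$ in an orthonormal basis of $\R^n$, both $\Sigma$ and $J(x)$ simultaneously become block-diagonal with $2\times 2$ blocks $\Sigma_0 = \left(\begin{smallmatrix} a & b \\ b & c \end{smallmatrix}\right)$ and $J_\lambda = \left(\begin{smallmatrix} 0 & 1 \\ -\lambda & -1 \end{smallmatrix}\right)$, one block for each eigenvalue $\lambda \in [m,M]$ of $\nabla^2 V(x)$. The full $(2n)$-dimensional condition thus decouples into the uniform $2\times 2$ Lyapunov-type inequality $-\tfrac{1}{2}(\Sigma_0 J_\lambda + J_\lambda^T \Sigma_0) \succeq \alpha I_2$ for every $\lambda \in [m,M]$.

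A direct computation expresses the left-hand side as $\left(\begin{smallmatrix} b\lambda & (c\lambda+b-a)/2 \\ (c\lambda+b-a)/2 & c-b \end{smallmatrix}\right)$, so its positive definiteness amounts to $b>0$, $c>b$, and the scalar inequality $(c\lambda+b-a)^2 < 4b(c-b)\lambda$. The excess $4b(c-b)\lambda - (c\lambda+b-a)^2$ is a downward-opening parabola in $\lambda$ whose roots factor as perfect squares,
\[
\sqrt{\lambda_\pm} = \frac{\sqrt{ac-b^2}\pm \sqrt{b(c-b)}}{c},
\]
so the inequality holds throughout $[m,M]$ iff $\sqrt{\lambda_-} < \sqrt{m} \le \sqrt{M} < \sqrt{\lambda_+}$, i.e.\ $\sqrt M - \sqrt m < 2\sqrt{b(c-b)}/c$. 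The right-hand side is maximised over $(b,c)$ at $b=c/2$, where it equals $1$; this is precisely where the hypothesis $\sqrt M - \sqrt m \le 1$ enters. Concretely I would normalise $c=1$, $b=1/2$, and pick $a$ with $\sqrt{a-\tfrac14} \in (\sqrt M - \tfrac12,\ \sqrt m + \tfrac12)$ (non-empty by the strict version of the hypothesis), for instance $a = \tfrac14 + \bigl(\tfrac{\sqrt m + \sqrt M}{2}\bigr)^2$, so that $(\lambda_-,\lambda_+)$ strictly contains $[m,M]$ and a uniform $\alpha>0$ can be extracted.

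Once $\Sigma$ is in hand, the distance $d$ associated to $\mathcal T(f) = \langle \Sigma \nabla f, \nabla f \rangle$ is equivalent to the Euclidean distance on $\R^{2n}$ (since $\Sigma$ is constant and uniformly positive definite), so the preceding theorem delivers the desired Wasserstein contraction with $W_2$ the Euclidean one. The main technical obstacle is the algebraic step of solving the $2\times 2$ Lyapunov inequality uniformly on $[m,M]$, and the hypothesis $\sqrt M-\sqrt m \le 1$ is sharp within the class of constant, block-scalar matrices $\Sigma$; the borderline case yields only semidefiniteness, so the positive rate $C_2 > 0$ requires the strict inequality (readily arranged by a small perturbation).
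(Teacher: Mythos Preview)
Your approach is correct and, at its core, the same as the paper's: both construct a constant block-scalar quadratic form $\mathcal T(f)=\langle\Sigma\nabla f,\nabla f\rangle$ and reduce, via diagonalisation of $\nabla^2 V$, to a uniform $2\times2$ positive-definiteness condition over $\lambda\in[m,M]$, from which the bound $\sqrt M-\sqrt m\le 1$ emerges. The only differences are in packaging: you invoke the preceding general theorem (so the $\mathcal T_2$ computation is already done), whereas the paper recomputes $\mathcal T_2$ from scratch via commutators $[X_0,Z_i]$, $[Y,Z_i]$ and discards the non-negative second-order term; and your parametrisation normalises the $(v,v)$-block ($c=1$, $b=1/2$) while the paper normalises the $(x,x)$-block ($\alpha^2+\gamma^2=1$) and recasts the quadratic condition through the substitution $\kappa=a+b-2a^2$, $\theta=b-a$. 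Your explicit factorisation $\sqrt{\lambda_\pm}=(\sqrt{ac-b^2}\pm\sqrt{b(c-b)})/c$ is a cleaner route to the same threshold. Both arguments share the borderline issue at $\sqrt M-\sqrt m=1$, where one obtains only semidefiniteness; your remark about recovering a strictly positive rate by perturbation is appropriate.
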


\begin{proof}
The idea is to work with a gradient on $\mathbb{R}^{2n}$ which does not come from the standard Euclidean structure and apply then Theorem \ref{contra1}. Let $\alpha,\beta,\gamma,\delta \in \mathbb{R}$ be constants to be chosen later and consider the gradient
\[
\mathcal{T}(f)=\sum_{i=1}^{n} \left( \alpha \frac{\partial f}{\partial x_i} +\beta  \frac{\partial f}{\partial v_i}\right)^2 +\left( \gamma \frac{\partial f}{\partial x_i} +\delta  \frac{\partial f}{\partial v_i}\right)^2.
\]
We denote as before
\[
\mathcal{T}_2 (f)=\frac{1}{2} ( L \mathcal{T}(f) - 2 \mathcal{T} (f , Lf)).
\]
If we can prove that we can chose $\alpha,\beta,\gamma,\delta \in \mathbb{R}$ such that for some constant $\rho >0$, 
\begin{align}\label{con2}
\mathcal{T}_2 (f) \ge \rho \mathcal{T}(f),
\end{align}
then the proof of the theorem is a consequence of Theorem \ref{contra1} since all norms are equivalent on $\mathbb{R}^{2n}$. 

\

We can write $\mathcal{T}$ in the form
\[
\mathcal{T}(f)=\sum_{i=1}^{2n} (Z_i f)^2
\]
with
\begin{align*}
Z_i=
\begin{cases}
\alpha \frac{\partial f}{\partial x_i} +\beta  \frac{\partial f}{\partial v_i} , 1 \le i \le n \\
 \gamma \frac{\partial f}{\partial x_{n-i}} +\delta  \frac{\partial f}{\partial v_{n-i}}, n+1 \le i \le 2n.
\end{cases}
\end{align*}

 and 
 \[
L=\sum_{i=1}^n X_i^2 +X_0+Y,
\]
where $X_i=\frac{\partial}{\partial v_i}$, $X_0=- v \cdot \nabla_v$ and $Y=\nabla V \cdot \nabla_v -v\cdot \nabla_x $.  We have then
\begin{align*}
\mathcal{T}_2 (f)&=\frac{1}{2} ( L \mathcal{T}(f) - 2 \mathcal{T} (f , Lf)) \\
 &=\frac{1}{2}\left(L\left(\sum_{i=1}^{2n} (Z_if)^2\right)-2\sum_{i=1}^{2n} Z_if Z_iLf\right)\\
 &=\sum_{i=1}^{2n}\sum_{j=1}^n (X_jZ_i f)^2 +\sum_{i=1}^{2n} Z_if [L,Z_i]f \\
 &=\sum_{i=1}^{2n} \sum_{j=1}^n (X_jZ_i f)^2+\sum_{i=1}^{2n} Z_if [X_0,Z_i]f +\sum_{i=1}^{2n} Z_if [Y,Z_i]f.
\end{align*}

As a consequence we obtain
\[
\mathcal{T}_2 (f) \ge \sum_{i=1}^{2n} Z_if [X_0,Z_i]f +\sum_{i=1}^{2n} Z_if [Y,Z_i]f.
\]

We now easily compute
\begin{align*}
[X_0,Z_i]=
\begin{cases}
\beta  \frac{\partial }{\partial v_i} , 1 \le i \le n \\
\delta  \frac{\partial }{\partial v_{i-n}}, n+1 \le i \le 2n.
\end{cases}
\end{align*}
and 
\begin{align*}
[Y,Z_i]=
\begin{cases}
\beta  \frac{\partial }{\partial x_i}-\alpha\sum_{j=1}^n  \frac{\partial^2 V}{\partial x_i \partial x_j} \frac{\partial }{\partial v_i}  , 1 \le i \le n \\
\delta  \frac{\partial }{\partial x_{i-n}}-\gamma\sum_{j=1}^n  \frac{\partial^2 V}{\partial x_i \partial x_j} \frac{\partial }{\partial v_{i-n}} , n+1 \le i \le 2n.
\end{cases}
\end{align*}
As a consequence, we obtain after straightforward computations
\begin{align*}
\mathcal{T}_2 (f) \ge  &  (\beta^2+\delta^2) \| \nabla_v f \|^2+(\alpha \beta +\gamma \delta) \| \nabla_x f \|^2+ (\beta^2 +\delta^2 +\alpha \beta +\gamma \delta)  \nabla_v f  \cdot  \nabla_x f \\
 & -(\alpha^2+\gamma^2) \nabla^2 V ( \nabla_v f, \nabla_x f)-(\alpha \beta +\gamma \delta) \nabla^2 V ( \nabla_v f, \nabla_v f).
\end{align*}
The right-hand side of the above inequality can be seen as a bilinear form on $\mathbb{R}^{2n}$ applied to $\nabla f= ( \nabla_x f, \nabla_v f)$. We want this form to be definite positive.

\

We first chose  $\alpha,\beta,\gamma,\delta$ such that $\alpha^2 +\gamma^2 =1$ and denote
\[
a=\alpha \beta +\gamma \delta, \quad  b =\beta^2+\delta^2.
\]
Observe that the only constraint on $a,b$ is that $a>0$ and  $ a ^2 \le b$.
A sufficient condition for the bilinear form to be definite positive is that for any eigenvalue $\lambda$ of $\nabla^2 V$, we have
\[
(a+b-\lambda)^2 < 4a (b-a \lambda).
\]
This inequality is equivalent to
\[
-\sqrt{\kappa^2-\theta^2} +\kappa < \lambda < \sqrt{\kappa^2-\theta^2} +\kappa,
\]
where 
\[
\kappa=a+b -2a^2, \quad \theta=b-a.
\]
We thus want to  chose $\alpha, \beta,\gamma,\delta$ in such a way that
\[
-\sqrt{\kappa^2-\theta^2} +\kappa=m, \sqrt{\kappa^2-\theta^2} +\kappa=M.
\]
This condition is equivalent to
\begin{align*}
\begin{cases}
\kappa=\frac{m+M}{2} \\
\theta^2=mM
\end{cases}
\end{align*}
We finally conclude by observing that the system
\begin{align*}
\begin{cases}
a+b -2a^2=\frac{m+M}{2} \\
b-a=\sqrt{mM}
\end{cases}
\end{align*}
has a solution $0<a^2 \le b$ as soon as $\sqrt{M}-\sqrt{m} \le 1$.
\end{proof}

\subsection{Example 2: Kolmogorov type operators on foliated manifolds}

To apply Theorem \ref{contra1} in concrete situations, the main problem is: 

\

\textit{
Given the operator $L$, how can we find a bilinear form $\mathcal{T}$ that satisfies 
\[
\mathcal{T}_2 (f) \ge K \mathcal{T}(f) 
\]
for some constant $K$?}

\

We show in this section that the geometry of  foliations can be useful to find a canonical $\mathcal{T}$ once a convenient foliation associated to $L$ is found.

\

Let $\M$ be a smooth, connected  manifold with dimension $n+m$. We assume that $\bM$ is equipped with a Riemannian foliation $\mathcal{F}$ with  $m$-dimensional leaves. We denote by $\Dv$ the vertical Laplacian of this foliation.

\begin{definition}
We call Kolmogorov type operator, a hypoelliptic diffusion operator $L$ on $\M$ that can be written as
\[
L=\Dv+Y,
\]
where $Y$ is a smooth vector field on $\M$.
\end{definition}

The simplest example of such an operator was studied by Kolmogorov himself. It is the operator
\[
L=\frac{\partial^2}{\partial v^2}+v \frac{\partial}{\partial x}.
\]
Then, by considering the trivial foliation on $\R^2$ whose leaves are the lines $\{(0,v), v \in \R \}$ , we can write $L=\Dv+Y$ with $\Dv=\frac{\partial^2}{\partial v^2}$ and $Y=v \frac{\partial}{\partial x}$. 

\

More interesting is the case of the kinetic of the Fokker-planck equation that was already considered before. Consider on $\R^{2n}=\{(v,x), v \in \R^n,  x \in \R^n \}$, the operator
\[
L=\Delta_v  - v \cdot \nabla_v +\nabla_x V \cdot \nabla_v -v\cdot \nabla_x,
\]
where $V: \R^n \to \R$ is a smooth potential. We can obviously write
\[
L=\Dv+Y,
\]
where $\Dv=\Delta_v$ and $Y=- v \cdot \nabla_v +\nabla_x V \cdot \nabla_v -v\cdot \nabla_x$, and consider the trivial foliation on $\R^{2n}$ where the leaves are given by the sets $\{(0,v), v \in \R^n \}$. 
\

Consider a Kolmogorov type operator 
\[
L=\Dv+Y,
\]
and assume that the Riemannian foliation $\mathcal{F}$ is totally geodesic with a bundle like metric (see \cite{B2} for a detailed description of this setting).

\

To use the results of Section 2, we  assume that there exists a function $W$ such that $W \ge 1$, $\| \nabla W \|^2 \le C W$, $LW \le C W$ for some constant $C>0$ and $\{ W \le m \}$ is compact for every $m$.  

\

We denote by $\mathbf{Ric}_\mathcal{V}$ the  Ricci curvature of the leaves and we denote by $DY$ the tensor defined by $DY(U,V)=\langle D_U Y, V \rangle$ where $D$ is the Levi-Civita connection of the Riemannian metric. 

\begin{theorem}\label{GB18}
Let us assume that for some $K > 0$,
\[
\mathbf{Ric}_\mathcal{V}  -DY  \ge K.
\]
Then, there exists a unique probability measure $\mu \in \mathcal{P}(\mathbb{M})$  such that for every $t \ge 0$, $P^*_t \mu=\mu$. Moreover, for every $\nu \in \mathcal{P}(\mathbb{R}^n)$
\[
W_{2} (P^*_t \nu , \mu) \le e^{-Kt} W_{2} (\mu,\nu),
\]
where $W_{2}$ is the Wasserstein distance associated to the Riemannian distance on $\M$.

\end{theorem}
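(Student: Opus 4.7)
The plan is to apply the corollary of Theorem \ref{contra1} with the natural Riemannian choice
\[
\mathcal{T}(f)=\|\nabla f\|^2,
\]
where $\nabla$ is the gradient of the Riemannian metric on $\M$. The associated distance is then the Riemannian distance, and Assumption \ref{assumption1} is granted by the hypothesized Lyapunov function $W$. So everything reduces to establishing
\[
\mathcal{T}_2(f)\ \ge\ K\,\mathcal{T}(f),\qquad f\in C_0^\infty(\M).
\]

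Since the construction of $\mathcal{T}_2$ is linear in $L$, I would split $L=\Dv+Y$ and handle the two pieces separately. The vector field $Y$ contributes a first-order piece that is straightforward: writing $Yf=\langle Y,\nabla f\rangle$ and using the symmetry of the Hessian $\nabla^2 f(U,V)=\langle D_U\nabla f,V\rangle$, a direct calculation gives
\[
\tfrac{1}{2}\bigl(Y\|\nabla f\|^2-2\langle\nabla f,\nabla(Yf)\rangle\bigr)=-DY(\nabla f,\nabla f),
\]
the two Hessian terms $\nabla^2 f(Y,\nabla f)$ cancelling against $\nabla^2 f(\nabla f,Y)$. This is the step where the tensor $DY$ in the hypothesis shows up.

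The main step, and the main obstacle, is the vertical piece
\[
\tfrac{1}{2}\Dv\|\nabla f\|^2-\langle\nabla f,\nabla \Dv f\rangle.
\]
This is a horizontal Bochner-type identity for the sub-Laplacian $\Dv$ in a totally geodesic Riemannian foliation with bundle-like metric, and this is exactly the setting developed in \cite{B2}. The point is that, because the leaves are totally geodesic and the metric is bundle-like, the horizontal distribution is invariant along the leaves and one can commute $\Dv$ with the horizontal derivation. One then obtains an identity of the form
\[
\tfrac{1}{2}\Dv\|\nabla f\|^2-\langle\nabla f,\nabla \Dv f\rangle\ \ge\ \mathbf{Ric}_\mathcal{V}(\nabla f,\nabla f)
\]
(the missing terms being a squared vertical Hessian, which is nonnegative, plus curvature terms which vanish or have the correct sign thanks to the totally geodesic hypothesis). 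I would not redo this calculation from scratch; I would simply quote it from \cite{B2}.

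Combining the two contributions yields
\[
\mathcal{T}_2(f)\ \ge\ (\mathbf{Ric}_\mathcal{V}-DY)(\nabla f,\nabla f)\ \ge\ K\,\|\nabla f\|^2=K\,\mathcal{T}(f),
\]
which is exactly the hypothesis required to apply the corollary of Theorem \ref{contra1}. Existence and uniqueness of the invariant probability measure, together with the exponential Wasserstein contraction at rate $K$ with respect to the Riemannian distance, then follow immediately.
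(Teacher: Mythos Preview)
Your proposal is correct and follows essentially the same route as the paper: choose $\mathcal{T}(f)=\|\nabla f\|^2$, establish $\mathcal{T}_2(f)\ge(\mathbf{Ric}_\mathcal{V}-DY)(\nabla f,\nabla f)$, and invoke the corollary of Theorem~\ref{contra1}. The only cosmetic difference is that you split $L=\Dv+Y$ and compute the $Y$-contribution explicitly before quoting \cite{B2} for the vertical Bochner piece, whereas the paper quotes the combined inequality directly from \cite{B2}.
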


\begin{proof}
 If $f \in C^\infty(\M)$, we denote
\[
\mathcal{T}_2(f)=\frac{1}{2} \left( L (\| \nabla f \|^2) -2\langle \nabla f , \nabla L f \rangle \right),
\]
where $\nabla$ is the Riemannian gradient.  It is proved in \cite{B2} that for every $f \in C^\infty(\M)$,
\[
\mathcal{T}_2(f) \ge (\mathbf{Ric}_\mathcal{V}  -DY ) (\nabla f, \nabla f).
\]
As a consequence,
\[
\mathbf{Ric}_\mathcal{V}  -DY  \ge K
\]
implies that
\[
\mathcal{T}_2(f) \ge K \| \nabla f \|^2.
\]
\end{proof}

\subsection{Example 3: The horizontal Laplacian on totally geodesic Riemannian foliations}

In this section, we study a class of hypoelliptic diffusion operators which are relevant in sub-Riemannian geometry. Problems of convergence for the semigroup associated with those operators have already been studied in \cite{BB}. We revisit those results in view of our new approach.

\ 

We consider a smooth connected $n+m$ dimensional manifold $\M$ which is equipped with a Riemannian foliation with a bundle like  metric $g$ and totally $m$ dimensional geodesic leaves. We moreover assume that the metric $g$ is complete and that the horizontal distribution $\mathcal{H}$ of the foliation is bracket-generating and Yang-Mills (see \cite{B2} for a detailed presentation of this framework). The hypothesis that $\mathcal{H}$ is bracket generating implies that the horizontal Laplacian $\Delta_{\mathcal{H}}$ is subelliptic and the completeness assumption on $g$ implies that $\Delta_{\mathcal{H}}$ is essentially self-adjoint on the space of smooth and compactly supported functions. The heat semigroup generated by $\Delta_{\mathcal{H}}$ will be denoted by $P_t$. We denote by $\mu$ the Riemannian reference measure on $\M$.

\

As before, the sub-bundle $\mathcal{V}$ formed by vectors tangent to the leaves is referred  to as the set of \emph{vertical directions}. The sub-bundle $\mathcal{H}$ which is normal to $\mathcal{V}$ is referred to as the set of \emph{horizontal directions}.   The Riemannian gradient will denoted $\nabla$ and the horizontal gradient, which is the projection of $\nabla$ onto $\mathcal{H}$, by $\nabla_\mathcal{H}$. Likewise, $\nabla_\mathcal{V}$ will denote the vertical gradient.

\

The metric $g$ can be split as
\[
g=g_\mathcal{H} \oplus g_{\mathcal{V}},
\]
and  we introduce the one-parameter family of rescaled Riemannian metrics:
\[
g_{\varepsilon}=g_\mathcal{H} \oplus  \frac{1}{\varepsilon }g_{\mathcal{V}}, \quad \varepsilon >0.
\]
It is called the canonical variation of $g$. The Riemannian distance associated with $g_{\varepsilon}$ will be denoted by $d_\varepsilon$. It should be noted that $d_\varepsilon$, $\varepsilon >0$, form an increasing (as $\varepsilon\downarrow 0$) family of distances converging to the sub-Riemannian distance.

\

The following definition was introduced in \cite{BG}.

\begin{definition}
Let $K \in \mathbb{R}$, $\kappa \ge 0$, $\rho_2 \ge 0$. We say that the sub-Laplacian $\Delta_\mathcal{H}$ satisfies the generalized curvature dimension inequality $CD(K,\kappa, \rho_2, \infty)$ if for every $f \in C_0^\infty(\M)$ and every $\varepsilon >0$,
\[
\frac{1}{2} \Delta_{\mathcal{H}} \| \nabla f \|^2_\varepsilon -\langle \nabla f , \nabla \Delta_\mathcal{H} f \rangle_\varepsilon \ge  \left( K-\frac{\kappa}{\varepsilon}\right) \| \nabla f \|^2_\mathcal{H}+\rho_2 \| \nabla f \|^2_\V.
\]
\end{definition}

Geometric conditions ensuring that the sub-Riemannian curvature inequality is satisfied were studied in  \cite{BG,BKW}. The parameter $K$ is a lower bound on a sub-Riemannian Ricci tensor and the parameters $\kappa,\rho_2$ are respectively upper and lower bounds on torsion related tensors. Observe that the parameter $\rho_2$ is always $\ge 0$. Since
\[
 \| \nabla f \|^2_\varepsilon =\| \nabla f \|^2_\mathcal{H}+\varepsilon \| \nabla f \|^2_\V,
\]
we see that the generalized curvature dimension inequality implies a criterion similar to the one of the previous section, and thus Theorem 1.1 applies. In particular, we obtain the following result:

\begin{proposition}
Assume that the sub-Laplacian $\Delta_\mathcal{H}$ satisfies the generalized curvature dimension inequality $CD(K,\kappa, \rho_2, \infty)$ with $K,\rho_2 >0$, then the semigroup $P_t$ generated by  $\Delta_{\mathcal{H}}$ converges to equilibrium and moreover  for every $\varepsilon > \frac{\kappa}{K}$,  $\mu,\nu \in \mathcal{P}(\M)$, and $t \ge 0$,
\[
W^\varepsilon_{2} (P^*_t \mu , P^*_t \nu) \le e^{-\lambda_\varepsilon t} W^\varepsilon_{2} (\mu,\nu),
\]
where $W^\varepsilon_{2}$ is the $L^2$ Wasserstein distance associated to the distance $d_\varepsilon$ and $\nu_\varepsilon=\min \left\{ K-\frac{\kappa}{\varepsilon}, \frac{\rho_2}{\varepsilon} \right\} $
\end{proposition}

\section{Convergence in $H^1(\mu)$}

In this section, we summarize and revisit the approach in \cite{baudoin-bakry} to prove convergence to equilibrium. The method typically yields convergence under much weaker assumptions, however requires some informations about the invariant measure of the operator $L$ which may difficult to check in practice. The strength of the methods developed in Section 2, is that the invariant measure is not even assumed to exist.

\

As in Section 2, we consider on $\mathbb{R}^n$ a hypoelliptic diffusion operator 
\[
L=\sum_{i,j=1}^n a_{ij} (x) \frac{\partial^2}{\partial x_i \partial x_j } +\sum_{i=1}^n b_i (x) \frac{\partial}{\partial x_i},
\]
 with smooth coefficients $a_{ij},b_i$. Let us assume that $L$ admits an invariant probability measure $\mu$. We also consider a first order symmetric and positive definite bilinear form
 \[
 \mathcal{T}(f,g) =\sum_{i,j=1}^n \sigma_{ij}(x) \frac{\partial f}{\partial x_i}\frac{\partial g}{\partial x_j},
 \]

with smooth coefficients and define

\[
\mathcal{T}_2 (f)=\frac{1}{2} ( L \mathcal{T}(f) - 2 \mathcal{T}( f , Lf)).
\]

The basic assumptions are:

\begin{assumption}

\

\begin{itemize}
\item There exist constants $K_1,K_2 >0 $ such that
\[
\int \mathcal{T}_2 (f) d\mu \ge K_1 \int \mathcal{T}(f) d\mu  -K_2 \int \Gamma(f) d\mu,
\]
where $\Gamma$ is the  \textit{carr\'e du champ} operator associated to $L$.
\item The invariant probability measure $\mu$ satisfies a Poincar\'e inequality
\[
\int f^2 d\mu -\left( \int f d\mu \right)^2 \le C \int \mathcal{T} (f) d\mu.
\]
\end{itemize}
\end{assumption} 

Under these assumptions, provided that $L$ generates a nice semigroup $P_t$ and that the following computations can be justified, we can prove convergence to equilibrium as follows. Let $f$ such that $\int f d\mu =0$ and consider the functional
\[
\Phi(t)= \int \mathcal{T}(P_t f) d\mu+b \int (P_t f)^2 d\mu
\]
where $b$ is a positive number to be chosen later. Differentiating $\Phi$ yields
\begin{align*}
\Phi'(t)&=-2\int \mathcal{T}_2(P_t f) d\mu-2b \int \Gamma(P_t f) d\mu \\
 &\le -2K_1 \int \mathcal{T}(P_t f) d\mu+(2K_2-2b) \int \Gamma(P_t f) d\mu \\
 &\le -2K_1 \int \mathcal{T}(P_t f) d\mu+\frac{1}{C} (2K_2-2b)\int (P_t f)^2 d\mu \\
 \end{align*}
Therefore, by chosing a suitable $b$ we obtain for some $0< C'\le \min \{ 2K_1, 2/C \}$ the Gronwall's inequality
\[
\Phi(t) \le e^{-C' t} \Phi(0),
\]
which implies
\[
 \int \mathcal{T}(P_t f) d\mu+b \int (P_t f)^2 d\mu\le  e^{-C' t} \left( \int \mathcal{T}( f) d\mu+b \int f^2 d\mu\right),
\]
and thus convergence to equilibrium at an exponential rate.

\

It may not be easy to justify rigorously the above argument. However, in the    case of the kinetic Fokker-Planck equation, where

\[
L=\Delta_v  - v \cdot \nabla_v +\nabla_x V \cdot \nabla_v -v\cdot \nabla_x ,
\]

everything can be done rigorously (see \cite{baudoin-bakry}) and we exactly get the following result, originally due to Villani.

\begin{theorem}[Villani \cite{Villani1}, Theorem 35]\label{Villani}
Define $$H^1(\mu)=\{ f \in L^2(\mu), \| \nabla f \| \in L^2(\mu)\}.$$ Assume that there is a constant $c>0$ such that $\| \nabla^2 V \| \le c( 1+ \| \nabla V \|)$ and that the normalized invariant measure $d\mu=\frac{1}{Z}e^{-V(x)-\frac{\| v \|^2}{2}} dxdv$  is a probability measure that satisfies the classical Poincar\'e inequality
\[
\int_{\mathbb{R}^{2n}} \| \nabla f \|^2 d\mu \ge \kappa \left[ \int_{\mathbb{R}^{2n}} f^2 d\mu -\left( \int_{\mathbb{R}^{2n}} f d\mu\right)^2 \right].
\]
Then, there exist constants $C>0$ and $\lambda >0$ such that for every $f \in H^1(\mu)$, with $\int_{\mathbb{R}^{2n}} f d\mu=0$,
\[
\int_{\mathbb{R}^{2n}} (P_t f)^2 d\mu + \int_{\mathbb{R}^{2n}} \| \nabla P_t f \|^2 d\mu \le C e^{-\lambda t} \left( \int_{\mathbb{R}^{2n}} f^2 d\mu + \int_{\mathbb{R}^{2n}} \| \nabla  f \|^2 d\mu\right)
\]
\end{theorem}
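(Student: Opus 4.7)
The plan is to apply the general scheme outlined just before the theorem's statement, with a judicious choice of bilinear form $\mathcal{T}$ on $\mathbb{R}^{2n}$, and then to justify the resulting formal computations rigorously using the growth bound on $\nabla^2 V$. Following the Section~2.2 computation for the contraction theorem, the natural candidate is a positive definite quadratic form in $(\nabla_x f, \nabla_v f)$ of the shape
\[
\mathcal{T}(f) = a\,\|\nabla_v f\|^2 + 2c\,\nabla_v f\cdot\nabla_x f + b\,\|\nabla_x f\|^2,
\]
with constants $a,b,c>0$ chosen so that the matrix $\begin{pmatrix} b & c \\ c & a\end{pmatrix}$ is positive definite and the subsequent inequalities work. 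Note that in contrast with Theorem~\ref{plk}, we only need a pointwise $\mathcal{T}_2$ bound after integration against $\mu$, which allows integration by parts and is therefore much less restrictive.

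The first step is to compute $\mathcal{T}_2(f)$ pointwise, as in the proof of Theorem~\ref{plk}: this yields a quadratic form in $(\nabla_x f,\nabla_v f)$ whose only genuinely negative contributions come from the Hessian term $\nabla^2 V(\nabla_v f,\nabla_x f)$ and from the cross term produced by $[Y,Z_i]$. Integrating against $\mu$ and performing an integration by parts on the offending Hessian term (trading $\nabla^2 V$ against $\nabla V$ and derivatives of $f$) converts the problematic pointwise term into one controlled by the carré du champ $\Gamma(f)=\|\nabla_v f\|^2$. By tuning $a,b,c$ one obtains the assumption
\[
\int \mathcal{T}_2(f)\,d\mu \ge K_1 \int \mathcal{T}(f)\,d\mu - K_2 \int \Gamma(f)\,d\mu
\]
with $K_1,K_2>0$. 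The Poincaré inequality for $\mathcal{T}$ is immediate from the classical Poincaré inequality assumed on $\mu$, since $\mathcal{T}(f)$ is comparable to $\|\nabla f\|^2$.

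Once both hypotheses of the abstract scheme hold, one considers
\[
\Phi(t) = \int \mathcal{T}(P_t f)\,d\mu + b\int (P_t f)^2 d\mu
\]
for $f$ with $\int f\,d\mu = 0$, differentiates, inserts the $\mathcal{T}_2$ inequality, then uses $\Gamma \le (\text{const})\,\mathcal{T}$ and the Poincaré inequality to obtain $\Phi'(t) \le -C'\Phi(t)$ for a suitable choice of $b$ large enough to absorb $K_2$. Gronwall then yields the exponential decay in $H^1(\mu)$ claimed in the theorem; the norm equivalence at the end is just the equivalence of $\int f^2\,d\mu + \int \mathcal{T}(f)\,d\mu$ with the standard $H^1(\mu)$ norm.

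The main obstacle is rigor: $P_t f$ need not be smooth enough to make the derivatives of $\Phi$ meaningful, and $\mathcal{T}(P_t f)$ is not known a priori to be integrable against $\mu$ nor to decay at infinity fast enough for the integration by parts used to convert the Hessian term. This is exactly where the condition $\|\nabla^2 V\|\le c(1+\|\nabla V\|)$ enters: it provides weighted bounds that, combined with hypoelliptic regularization of $P_t$ and a standard approximation by truncation $f_n\in C_0^\infty$, allow one to control all boundary terms and pass to the limit. For the kinetic Fokker-Planck operator this delicate justification has been carried out in \cite{baudoin-bakry}, and the statement follows by applying that machinery to the present choice of $\mathcal{T}$.
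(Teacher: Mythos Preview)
Your proposal is correct and follows essentially the same route as the paper: the paper itself does not give a self-contained proof here but merely instantiates the abstract Section~3 scheme for the kinetic Fokker-Planck operator and defers the rigorous justification (choice of the mixed quadratic form $\mathcal{T}$, verification of the integrated $\mathcal{T}_2$-inequality under the growth condition $\|\nabla^2 V\|\le c(1+\|\nabla V\|)$, and the approximation arguments) to \cite{baudoin-bakry}. Your sketch matches this, and in fact spells out more of the mechanics than the paper does; the only minor imprecision is that the Hessian terms are not handled by integration by parts alone but also require retaining the second-order Bochner terms $\sum_{i,j}(X_jZ_if)^2$ in $\mathcal{T}_2$ to absorb the contributions coming from $\|\nabla V\|$ after invoking the growth bound.
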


\end{document}